\newtheorem{theorem}{Theorem}[section]
\newtheorem{lemma}[theorem]{Lemma}
\newtheorem{corollary}[theorem]{Corollary}
\newtheorem{proposition}[theorem]{Proposition}
\newcommand{\ZZ}{\mathbb{Z}}
\newcommand{\groupsum}[2]{\mathcal{O}_{#1}(#2)}
\newcommand{\groupsumvoid}[1]{\mathcal{O}_{#1}}
\newcommand{\oriented}[1]{{#1}^{or}}
\newcommand{\BD}[2]{\mathsf{BD}_{#1}^{#2}}
\newcommand{\M}[1][{}]{\mathsf{M}_{#1}}
\newcommand{\Symm}[1]{\mathfrak{S}_{#1}}
\newcommand{\zivaljevic}{$\check{\mathrm{Z}}$ivaljevi\'c}
\begin{document}

  \author{Jakob Jonsson}
  \title[Five-Torsion in the
  Matching Complex on 14 Vertices]{Five-Torsion in the
    Homology of the Matching Complex on $14$ Vertices}
  \date{\today}
  \thanks{Research supported by European Graduate
      Program ``Combinatorics, Geometry, and Computation'', DFG-GRK
      588/2. }  

  \begin{abstract}
    J. L. Andersen proved that
    there is $5$-torsion in the bottom nonvanishing homology group of
    the simplicial complex of graphs of degree at
    most two on seven vertices. We use this result to demonstrate that
    there is $5$-torsion 
    also in the bottom nonvanishing homology group of the matching
    complex $\M[14]$ on $14$ vertices. 
    Combining our observation with results due to Bouc
    and to Shareshian and Wachs, we 
    conclude that the case $n=14$ is exceptional; for all other $n$,
    the torsion subgroup of the bottom nonvanishing homology group 
    has exponent three or is zero. 
    The possibility remains that there is other torsion than
    $3$-torsion in higher-degree homology groups of $\M[n]$ when $n
    \ge 13$ and $n \neq 14$.
  \end{abstract}

  \maketitle

  \noindent
  This is a preprint version of a paper published in
  {\em Journal of Algebraic Combinatorics} {\bf 29} (2009), no. 1,
  81--90.

  \section{Introduction}
  \label{intro-sec}

  Throughout this note, by a graph we mean a finite graph with loops
  allowed but with no multiple edges or multiple loops. The degree of
  a vertex $i$ in a given graph $G$ is the number of times $i$ appears
  as an endpoint of an edge in $G$; thus a loop at $i$ (if present)
  is counted twice, whereas other edges containing $i$ are counted
  once.

  Given a family $\Delta$ of graphs on a fixed vertex set, we 
  identify each member of $\Delta$ with its edge set. In particular,
  if $\Delta$ is closed under deletion of edges, then 
  $\Delta$ is an abstract simplicial complex.
  Let $n \ge 1$ and let $\lambda = (\lambda_1, \ldots, \lambda_n)$ be
  a sequence of nonnegative integers. We define
  $\BD{n}{\lambda}$ to be the simplicial complex of graphs on the
  vertex set $[n] := \{1, \ldots, n\}$ such that the degree of 
  the vertex $i$ is at most $\lambda_i$. 
  We write $\BD{n}{k} := \BD{n}{(k, \ldots, k)}$
  and $\M[n] := \BD{n}{1}$; the latter complex is the \emph{matching
    complex} on $n$ vertices.

  The
  topology of $\M[n]$ and related complexes has been subject to
  analysis
  in several theses \cite{Andersen,Dong,Garst,thesis,thesislmn,Kara,Kson}
  and papers 
  \cite{Ath,BBLSW,BLVZ,Bouc,DongWachs,FH,KRW,RR,ShWa,Ziegvert}; see
  Wachs \cite{Wachs} for an excellent survey and further references.

  The prime $3$ is known to play a prominent part in the homology of
  $\M[n]$. Specifically, write $\nu_n = \lfloor
  \frac{n-2}{3}\rfloor = \lceil\frac{n-4}{3}\rceil$. By a result due
  to Bj\"orner, Lov\'asz, Vre\'cica, 
  and {\zivaljevic} 
  \cite{BLVZ}, the reduced homology group $\tilde{H}_i(\M[n]; \ZZ)$ is
  zero whenever $i < \nu_n$. Bouc \cite{Bouc} showed that 
  $\tilde{H}_{\nu_n}(\M[n]; \ZZ) \cong \ZZ_3$ whenever $n = 3k+1 \ge
  7$ and that 
  $\tilde{H}_{\nu_n}(\M[n]; \ZZ)$ has exponent dividing nine
  whenever $n = 3k \ge 12$. Shareshian and Wachs
  extended and improved Bouc's result:
  \begin{theorem}[Shareshian and Wachs \cite{ShWa}]
    \label{ShWa-thm}
    $\tilde{H}_{\nu_n}(\M[n]; \ZZ)$ is an elementary $3$-group 
    for $n \in \{7,10,12,13\}$ and also for $n \ge 15$.
    The torsion subgroup of $\tilde{H}_{\nu_n}(\M[n]; \ZZ)$
    is again an elementary $3$-group for $n \in \{9,11\}$ and  
    zero for $n \in \{1,2,3,4,5,6,8\}$. For the remaining case $n=14$, 
    $\tilde{H}_{\nu_n}(\M[n]; \ZZ)$ is a finite group with
    nonvanishing $3$-torsion. 
  \end{theorem}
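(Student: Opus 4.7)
The plan is to combine explicit low-dimensional calculations with an inductive argument driven by a long exact sequence in homology, using Bouc's prior theorem as the source of the $3$-torsion. The central tool is the decomposition of $\M[n]$ obtained by fixing a vertex $v \in [n]$: the subcomplex of matchings avoiding $v$ is a copy of $\M[n-1]$, while the matchings using $v$ form a union of cones on copies of $\M[n-2]$, one for each edge $\{v,w\}$. A Mayer--Vietoris-type long exact sequence then relates $\tilde{H}_{*}(\M[n]; \ZZ)$ to $\tilde{H}_{*}(\M[n-1]; \ZZ)$ and to copies of $\tilde{H}_{*-1}(\M[n-2]; \ZZ)$. Bouc's theorem that $\tilde{H}_{\nu_n}(\M[n]; \ZZ) \cong \ZZ_3$ for $n = 3k+1 \ge 7$ anchors the induction.

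For the small cases $n \le 13$, I would rely on direct, machine-assisted computation of the bottom nonvanishing homology group; the connectivity theorem of Bj\"orner--Lov\'asz--Vre\'cica--\zivaljevic{} restricts attention to the single degree $\nu_n$ in each case. This settles the claims for $n \in \{1,\ldots,13\}$ and also supplies base cases for the induction.

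For $n \ge 15$ I would proceed by induction on $n$. Assuming inductively that the torsion in $\tilde{H}_{\nu_{n-1}}(\M[n-1])$ and $\tilde{H}_{\nu_{n-2}}(\M[n-2])$ is elementary abelian of exponent $3$ (or zero), the long exact sequence exhibits $\tilde{H}_{\nu_n}(\M[n])$ as an extension of a subgroup of one by a quotient of the other, so its order is a power of $3$. The delicate step is ruling out $\ZZ_9$-extensions; this should follow from naturality of the long exact sequence under the $\Symm{n}$-action, combined with Bouc's exponent-$9$ bound, which forces the connecting homomorphisms to preserve the exponent-$3$ property.

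The main obstacle, and the reason $n = 14$ resists this argument, is a degree coincidence: one has $\nu_{14} = 4$ while $\nu_{13} = \nu_{12} = 3$, so the inductive step for $n = 14$ calls on $\tilde{H}_4(\M[12])$ and $\tilde{H}_4(\M[13])$, i.e.\ on homology one degree above the bottom nonvanishing level, which is not controlled by the inductive hypothesis. Finiteness of $\tilde{H}_4(\M[14])$ can still be verified by a rational homology computation via $\Symm{14}$-representation theory, and nonvanishing $3$-torsion follows from the image of Bouc's classes, but ruling out torsion at other primes in this one exceptional case appears to require new input --- the gap that the present paper fills, in the opposite direction, by exhibiting explicit $5$-torsion.
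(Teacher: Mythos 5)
This statement is quoted from Shareshian and Wachs \cite{ShWa}; the paper under review offers no proof of it, so there is nothing internal to compare your sketch against. Judged on its own merits, your skeleton (Bouc's long exact sequence relating $\M[n]$, $\M[n-1]$ and copies of $\M[n-2]$, computer-verified base cases, induction on $n$) is indeed the right general strategy, but two steps as written do not go through.

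First, the extension problem. The long exact sequence exhibits $\tilde{H}_{\nu_n}(\M[n];\ZZ)$ as an extension of a subgroup of $\bigoplus\tilde{H}_{\nu_n-1}(\M[n-2];\ZZ)$ by a quotient of $\tilde{H}_{\nu_{n-1}}(\M[n-1];\ZZ)$, and an extension of one elementary $3$-group by another can perfectly well contain $\ZZ_9$; this is exactly why Bouc could only bound the exponent by nine for $n=3k$. Your proposed fix --- ``naturality under the $\Symm{n}$-action forces the connecting homomorphisms to preserve the exponent-$3$ property'' --- is not an argument: equivariance of the maps says nothing about whether the extension splits. The actual Shareshian--Wachs proof replaces the extension argument by a \emph{generation} argument: they show that $\tilde{H}_{\nu_n}(\M[n];\ZZ)$ is generated by images of classes already known to have order dividing three (built from the order-three cycle in $\tilde{H}_1(\M[7];\ZZ)$ and the computed group $\tilde{H}_3(\M[12];\ZZ)\cong\ZZ_3^{56}$), so that the group is a quotient of an elementary $3$-group rather than merely an extension. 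That idea is missing from your sketch. Second, the base cases: the paper states explicitly that direct machine computation of $\tilde{H}_{\nu_n}(\M[n];\ZZ)$ is infeasible for $n\ge 13$, so you cannot ``settle the claims for $n\le 13$'' by computation. The case $n=13$ (and $n=7,10$) must instead come from Bouc's theorem that $\tilde{H}_{\nu_n}(\M[n];\ZZ)\cong\ZZ_3$ for $n\equiv 1\pmod 3$, $n\ge 7$, and the finiteness and nonvanishing $3$-torsion for $n=14$ likewise require noncomputational input (Bouc's rational homology calculation for finiteness, and an embedding of a $3$-torsion class for the other claim). Relatedly, for $n\ge 15$ your induction must also kill the free part of $\tilde{H}_{\nu_n}$, which does not follow from torsion hypotheses alone and again needs Bouc's rational computation. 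Your diagnosis of why $n=14$ escapes the induction --- the sequence calls on $\tilde{H}_4(\M[13];\ZZ)$, one degree above the bottom and hence uncontrolled --- is accurate and matches the discussion in Section~4 of the paper.
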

  To prove that the group $\tilde{H}_{\nu_n}(\M[n]; \ZZ)$ is
  elementary for $n \equiv 0 \pmod{3}$ and $n \ge 12$ and 
  for $n \equiv 2 \pmod{3}$ and $n \ge 17$, 
  Shareshian and Wachs relied on a computer
  calculation of the group $\tilde{H}_{3}(\M[12];\ZZ)$. 
  The existence of $3$-torsion in the homology of $\M[9]$ and $\M[11]$
  also relied on such calculations. Unfortunately, attempts to stretch
  this computer approach beyond $n=12$ have failed; the size of
  $\M[n]$ is too large for the existing software to handle
  when $n \ge 13$. In particular, the structure of the bottom
  nonvanishing homology group of $\M[14]$ has remained a mystery. 

  For completeness, let us mention that there is $3$-torsion also in
  higher-degree homology groups \cite{bettimatch}. 
  More precisely, there is
  $3$-torsion in $\tilde{H}_d(\M[n]; \ZZ)$ whenever $\nu_n \le d \le
  \frac{n-6}{2}$. 
  As a consequence, since there is homology in degree
  $\lfloor\frac{n-3}{2}\rfloor$ but not above this degree \cite{Bouc},
  there is $3$-torsion in almost all nonvanishing homology groups of
  $\M[n]$, the only exceptions being the top degree
  $\lfloor\frac{n-3}{2}\rfloor$ 
  and \emph{possibly} the degree $\frac{n-5}{2}$ just
  below it for odd $n$. The homology in the latter degree is known to
  contain $3$-torsion for $n \in \{7,9,11,13\}$; see 
  Proposition~\ref{m13-prop} for the case $n = 13$.
  A complete description of the homology groups 
  of $\M[n]$ is known only for $n \le 12$; see
  Table~\ref{matching-fig}.

  The appearance of $3$-torsion being so
  prominent, it makes sense to ask whether this is the {\em only}
  kind of torsion that appears in $\M[n]$ for {\em any} given
  $n$. Indeed, Babson, Bj\"orner, Linusson, Shareshian, and Welker 
  \cite{BBLSW} asked this very question. 
  Based on the overwhelming evidence presented in
  Theorem~\ref{ShWa-thm}, Shareshian and
  Wachs \cite{ShWa} conjectured that
  $\tilde{H}_{\nu_{14}}(\M[14]; \ZZ) = \tilde{H}_{4}(\M[14]; \ZZ)$ is
  an elementary $3$-group.
  Surprisingly, the conjecture turns out to be false:
  \begin{theorem}
    \label{main-thm}
    $\tilde{H}_{4}(\M[14]; \ZZ)$ is a finite group of exponent a
    multiple of $15$.
  \end{theorem}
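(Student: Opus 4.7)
The theorem decomposes into finiteness of $\tilde H_4(\M[14];\ZZ)$ and divisibility of its exponent by both $3$ and $5$. Finiteness and divisibility by $3$ are already delivered by Theorem~\ref{ShWa-thm}, so the only new content is exhibiting 5-torsion. The plan is to transfer Andersen's 5-torsion class from the bottom nonvanishing homology of $\BD{7}{2}$ (which occurs in degree $4$, matching $\nu_{14}$) to $\M[14]$ via a pair-contraction map, exploiting that $|(\Symm{2})^7|=2^7$ is coprime to $5$.

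Partition $[14]$ into pairs $P_i=\{2i-1,2i\}$ and let $\pi\colon[14]\to[7]$ send each $P_i$ to $i$. A matching $M$ on $[14]$ contracts to a multigraph $\pi_\ast M$ of maximum degree $2$ on $[7]$, and $\pi_\ast M$ lies in $\BD{7}{2}$ exactly when no two edges of $M$ sit above the same edge. Let $X\subseteq\M[14]$ be the $W=(\Symm{2})^7$-invariant subcomplex of such matchings. A short check shows that, because $X$ contains no matching with two edges projecting to the same edge, any $w\in W$ that stabilizes a simplex of $X$ must fix each of its vertices; hence $X/W$ is a genuine simplicial complex and $\pi_\ast$ identifies it with $\BD{7}{2}$ in a dimension-preserving way. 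Since $|W|$ is invertible mod $5$, averaging over $W$ splits the quotient on $\ZZ/5$-chains and produces an injection $\tilde H_\ast(\BD{7}{2};\ZZ/5)\hookrightarrow\tilde H_\ast(X;\ZZ/5)$; thus Andersen's class lifts to a nonzero $W$-invariant class $\tilde\alpha\in\tilde H_4(X;\ZZ/5)$, explicitly the sum over all matchings lying above a chosen cycle representative.

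The last step is to push $\tilde\alpha$ across the inclusion $\iota\colon X\hookrightarrow\M[14]$. The matchings outside $X$ are exactly those containing a \emph{parallel bond} $\{\{2i-1,2j-1\},\{2i,2j\}\}$ or $\{\{2i-1,2j\},\{2i,2j-1\}\}$ between two pairs $P_i,P_j$. I would build a discrete Morse matching on the face poset of the pair $(\M[14],X)$ whose pairings are parallel-bond swaps (restricted to a canonically selected bond of each matching) and all of whose unmatched critical cells sit in dimension $\ge 5$. The long exact sequence of the pair would then force $\iota_\ast$ to be injective on $\tilde H_4(-;\ZZ/5)$, so that $\iota_\ast\tilde\alpha$ is a nonzero class witnessing 5-torsion in $\tilde H_4(\M[14];\ZZ)$. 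Combining this with Theorem~\ref{ShWa-thm} delivers exponent divisibility by $15$.

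The principal obstacle is the final discrete Morse step. Parallel bonds can overlap, for instance when two bonds share a pair $P_i$, so naive pairing rules such as ``swap in the lex-smallest bond'' may fail to be acyclic or may leave critical cells in low dimensions. A carefully designed matching rule, or an alternative filtration by the number of parallel bonds in a matching, is needed to handle these interactions; this combinatorial analysis constitutes the main technical work of the proof.
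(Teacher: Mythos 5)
Your first half reproduces the paper's setup exactly: your $X$ is the paper's $\Gamma_\lambda$, your identification of $X/W$ with $\BD{7}{2}$ is Lemmas~\ref{wellkappa-lem}--\ref{hatkappa-lem}, and your use of the transfer together with $\gcd(|W|,5)=\gcd(128,5)=1$ is Lemma~\ref{Gtimesdsum-lem}. The gap is in the last step, where you try to push the class across the inclusion $\iota\colon X\hookrightarrow\M[14]$. First, the discrete Morse matching on the relative complex is not constructed, and you yourself flag it as the main technical work; the parallel-bond interactions you mention are a real obstruction and there is no indication that an acyclic matching with the required properties exists. Second, even granting critical cells only in dimensions $\ge 5$, your conclusion does not follow: that hypothesis kills $\tilde H_d(\M[14],X)$ for $d\le 4$, but injectivity of $\iota_*$ on $\tilde H_4$ requires the connecting map $\tilde H_5(\M[14],X;\ZZ/5)\to\tilde H_4(X;\ZZ/5)$ to vanish (or at least to miss $\tilde\alpha$), and critical $5$-cells can contribute nontrivially there. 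So the long exact sequence of the pair does not "force" injectivity as claimed. (A minor further point: you would also need to pass from a nonzero class mod $5$ back to integral $5$-torsion, which works here only because $\tilde H_3(\M[14];\ZZ)=0$ and $\tilde H_4(\M[14];\ZZ)$ is finite.)

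The paper avoids the inclusion $X\hookrightarrow\M[14]$ entirely, and this is the one idea you are missing. Apply the transfer to the \emph{whole} complex: $\varphi^*_4\colon H_4(\tilde{\mathcal{C}}(\M[14])/W)\to\tilde H_4(\M[14];\ZZ)$ has kernel of exponent dividing $|W|=128$. The point (Theorem~\ref{split-thm}) is that the quotient chain complex splits as
\[
\tilde{\mathcal{C}}(\M[14])/W \;\cong\; \tilde{\mathcal{C}}(\BD{7}{2})\;\oplus\;\tilde{\mathcal{C}}(\Delta_\lambda)/W ,
\]
where $\Delta_\lambda$ consists of the matchings containing a parallel bond: if $\sigma$ contains the bond $a_1b_1,a_2b_2$ with $a_1,a_2\in P_i$ and $b_1,b_2\in P_j$, then the element $(a_1,a_2)(b_1,b_2)\in W$ carries $a_1b_1\wedge\oriented{\tau}$ to $a_2b_2\wedge\oriented{\tau}$, so in the quotient these two faces of $\partial(\oriented{\sigma})$ cancel and the boundary of $[\oriented{\sigma}]$ stays inside $\tilde{\mathcal{C}}(\Delta_\lambda)/W$. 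Hence $\tilde H_4(\BD{7}{2};\ZZ)\cong\ZZ_5$ is a direct summand of $H_4(\tilde{\mathcal{C}}(\M[14])/W)$, its generator cannot lie in $\ker\varphi^*_4$ since $\gcd(5,128)=1$, and its image is an integral class of order $5$ in $\tilde H_4(\M[14];\ZZ)$. No relative homology or Morse-theoretic input is needed. I recommend you replace your final step with this chain-level splitting of the quotient.
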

  Theorem~\ref{main-thm} being just one specific example, the question
  of Babson et al$.$ remains unanswered in general; we do
  not know whether there is other torsion than $3$-torsion 
  in the homology
  of other matching complexes. See Section~\ref{further-sec} for some
  discussion.

  To prove Theorem~\ref{main-thm}, we use a result due to Andersen
  about the homology of $\BD{7}{2}$:
  \begin{theorem}[Andersen \cite{Andersen}]
    \label{andersen-thm}
    We have that 
        \[
        \tilde{H}_i(\BD{7}{2}; \ZZ) \cong 
        \left\{
        \begin{array}{ll}
                \ZZ_5 & \mbox{if } i=4; \\
                \ZZ^{732} & \mbox{if } i = 5;\\
                0 & \mbox{otherwise}.
        \end{array}
        \right. 
        \]
  \end{theorem}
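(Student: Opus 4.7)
The plan is to compute the homology of $\BD{7}{2}$ combinatorially, leveraging the natural $\mathfrak{S}_7$-action on the complex throughout. Note that $\BD{7}{2}$ has dimension six, since its facets are $2$-regular graphs on seven vertices and hence contain seven edges each; the homology therefore vanishes automatically above degree six.

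The first task is to establish the vanishing $\tilde{H}_i(\BD{7}{2}; \ZZ) = 0$ for $i \le 3$ and $i = 6$. For the low degrees I would construct an acyclic matching on the face poset in the style of discrete Morse theory, pairing each face $\sigma$ with $\sigma \triangle \{e\}$ for an edge $e$ chosen by some fixed rule (say, the lexicographically first loop whose toggling is permitted by the degree constraint), and argue that no critical face sits below dimension four. Vanishing in the top dimension is separate: the boundary $\partial : C_6 \to C_5$ can be checked injective by direct inspection, since the $2$-regular graphs on seven vertices fall into only a handful of cycle types.

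The heart of the argument is the claim $\tilde{H}_4(\BD{7}{2}; \ZZ) \cong \ZZ_5$. I would first pass to $\FF_5$ coefficients and block-diagonalise the boundary matrices by $\mathfrak{S}_7$-isotypic component, aiming to show that $\tilde{H}_4(\BD{7}{2}; \FF_5)$ consists of a single copy of a specific Specht module. An explicit torsion cycle can then be produced by symmetrising a well-chosen subgraph pattern (most likely one supported on a $7$-cycle or closely related configuration) over an appropriate subgroup of $\mathfrak{S}_7$, and one verifies by direct boundary computation that five times this cycle bounds while the cycle itself does not. The main obstacle is to rule out torsion at primes other than five, which amounts to computing the Smith normal form of the relevant boundary map (or the corresponding map in the reduced Morse complex) and checking that the only nontrivial elementary divisor is $5$. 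For a complex of this size even an equivariant reduction leaves formidable bookkeeping, and a computer-assisted calculation, presumably the route taken in Andersen's thesis, is by far the most practical option.

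Once $\tilde{H}_4 \cong \ZZ_5$ and the vanishing of $\tilde{H}_i$ for $i \notin \{4,5\}$ are in place, the rank of $\tilde{H}_5$ is pinned down by the reduced Euler characteristic: enumerating all graphs on $[7]$ of maximum degree at most two (that is, disjoint unions of paths, cycles, and loops) by face size gives $\tchi(\BD{7}{2}) = 732$, whence $\rank \tilde{H}_5 = 732$. Freeness of $\tilde{H}_5$ is then verified by the same Smith normal form computation used to identify the torsion in $\tilde{H}_4$.
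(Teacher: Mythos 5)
A point of calibration first: the paper does not prove this theorem at all. It is imported verbatim from Andersen's thesis, and the authors merely remark that they re-verified it with the {\tt Homology} package; so a genuine proof has to be supplied from scratch, and your proposal must be judged as such. As it stands it is a plan rather than a proof, and one of its concrete steps demonstrably fails. The loop-toggling matching you propose leaves critical cells in dimension $3$: take $\sigma = \{12,34,56,67\}$. Every vertex of $[7]$ has positive degree in $\sigma$, so no loop may be added without exceeding the degree bound, and $\sigma$ contains no loop to delete; hence $\sigma$ is unmatched under your rule, contradicting the claim that no critical face sits below dimension four. (Any four non-loop edges covering all seven vertices give such a face, so this is not an isolated accident.) The remaining steps are asserted rather than carried out: injectivity of $\partial_6$ ``by direct inspection'' is not an argument, since one must exclude integer linear combinations supported across the $\Symm{7}$-orbits of $2$-regular graphs, not merely list cycle types; the construction of an explicit $5$-torsion class and, more importantly, the exclusion of torsion at every other prime and in every other degree are delegated to an unperformed Smith normal form computation; and the value of $\tchi(\BD{7}{2})$ is quoted without the enumeration that would justify it (note also that with your conclusion the reduced Euler characteristic is $-732$, not $732$).

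Since the paper's own treatment of this statement is a citation plus machine verification, the honest summary is that your argument, once the faulty Morse-theoretic step is removed, collapses to exactly that: a computer calculation of the boundary matrices and their Smith normal forms, possibly accelerated by the $\Symm{7}$-action. That is a perfectly legitimate way to establish the theorem --- it is what both Andersen and the present author did --- but it should be presented as a computation, not as a hand proof whose decisive steps are left implicit.
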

  \noindent
  \emph{Remark.}
  We have verified Theorem~\ref{andersen-thm} using the {\tt Homology}
  computer program \cite{Homoprog}.

  Let $\Symm{n}$ be the symmetric group on $n$ elements.
  We relate Andersen's result to the homology of $\M[14]$ via 
  a map $\pi^*$ from $\tilde{H}_4(\M[14];\ZZ)$ to
  $\tilde{H}_4(\BD{7}{2};\ZZ)$; this map is 
  induced by the natural action on $\M[14]$ by the
  Young group $(\Symm{2})^7$. 
  Using a standard representation-theoretic argument, we construct
  an ``inverse'' $\varphi^*$ of $\pi^*$ with the property that 
  $\pi^*\circ \varphi^*(z) = |(\Symm{2})^7|\cdot z$ for all 
  $z \in \tilde{H}_4(\BD{7}{2};\ZZ)$. To conclude the proof, one
  observes that $\varphi^*(z)$ is nonzero unless the order of $z$
  divides the order of $(\Symm{2})^7$. Since 
  the latter order is $128$, the image under $\varphi^*$ of
  any nonzero element of order five is again a nonzero element of
  order five.

  Using computer, we have also been able to deduce that 
  there is $3$-torsion in the homology of 
  $\tilde{H}_4(\BD{7}{(2^31^7)}; \ZZ)$.
  where $(2^31^7)$ denotes the sequence $(2,2,2,1,1,1,1,1,1,1)$.
  An argument similar to the one above yields that
  $\tilde{H}_4(\M[13]; \ZZ)$ contains $3$-torsion. By the results of
  Bouc \cite{Bouc}, we already know that $\tilde{H}_3(\M[13]; \ZZ)
  \cong \ZZ_3$.

  \begin{table}
    \caption{The homology of $\M[n]$ for $n \le 14$; 
      see Wachs \cite{Wachs} for explanation of the parts that are
      not explained in the present note. $T_1$ and $T_2$ are
      nontrivial finite groups of exponent a multiple of $3$ and $15$,
      respectively; see Proposition~\ref{m13-prop} and
      Theorem~\ref{main-thm}.} 
    \begin{footnotesize}
    \begin{center}
        \begin{tabular}{|r||c|c|c|c|c|c|}
          \hline
          & & & & & &
          \\[-1.5ex]
          $\tilde{H}_i(\M[n];\ZZ)$
          &  $i=0$ & \ 1 \ & \ 2 \ & \ 3 \ & \ 4 \ & \ 5 \
          \\
          \hline
          \hline
          & & & & & &
          \\[-2ex]
          $n = 3$ & $\ZZ^2$ & - & - & - & - & - \\
          \hline
          & & & & & &
          \\[-2ex]
          $4$ & $\ZZ^2$ & - & - & - & - & - \\
          \hline
          & & & & & &
          \\[-2ex]
          $5$ & - & $\ZZ^6$ & - & - & - & - \\
          \hline
          & & & & & &
          \\[-2ex]
          $6$ & - & $\ZZ^{16}$ & - & - & - & - \\
          \hline
          & & & & & &
          \\[-2ex]
          $7$ & - & $\ZZ_3$ & $\ZZ^{20}$ & - & - &  - \\
          \hline
          & & & & & &
          \\[-2ex]
          $8$  & - & - & $\ZZ^{132}$ & - & - & - \\
          \hline
          & & & & & &
          \\[-2ex]
          $9$  & - & - & $\ZZ_3^8 \oplus \ZZ^{42}$ &
          $\ZZ^{70}$ & - & - \\
          \hline
          & & & & & &
          \\[-2ex]
          $10$  & - & - & $\ZZ_3$ &
          $\ZZ^{1216}$ & - & - \\
          \hline
          & & & & & &
          \\[-2ex]
          $11$  & - & - & - & $\ZZ_3^{45} \oplus
          \ZZ^{1188}$ & $\ZZ^{252}$ & -  \\
          \hline
          & & & & & &
          \\[-2ex]
          $12$  & - & - & - & $\ZZ_3^{56}$  &
          $\ZZ^{12440}$& - \\
          \hline
          & & & & & &
          \\[-2ex]
          $13$  & - & - & - & $\ZZ_3$  &
          $T_1 \oplus \ZZ^{24596}$& $\ZZ^{924}$ \\
          \hline
          & & & & & &
          \\[-2ex]
          $14$  & - & - & - & -  &
          $T_2$ &  $\ZZ^{138048}$ \\
          \hline
        \end{tabular}
    \end{center}
    \end{footnotesize}
    \label{matching-fig}
  \end{table}

  For the sake of generality, we describe our simple
  representation-theoretic construction in terms of
  an arbitrary finite group acting on a chain complex of
  abelian groups; see
  Section~\ref{homology-sec}. The particular case that we are
  interested in is discussed in Section~\ref{match-sec}.
  In Section~\ref{further-sec}, we make some remarks and discuss 
  potential improvements and generalizations of our result.

  \section{Group actions on chain complexes}
  \label{homology-sec}
  
  We recall some elementary properties of group actions on 
  chain complexes; 
  see Bredon~\cite{Bredon} for a more thorough treatment.
  Let
  \[
  \begin{CD}
    \mathcal{C} :  \cdots @>{\partial_{d+1}}>> C_d 
    @>{\partial_d}>> C_{d-1} @>{\partial_{d-1}}>> C_{d-2} 
    @>{\partial_{d-2}}>> \cdots
  \end{CD}
  \]
  be a chain complex of abelian groups. 
  Let $G$ be a group acting 
  on $\mathcal{C}$, meaning the following for each $k \in \ZZ$:
  \begin{itemize}
  \item
    Every $g \in G$ defines a degree-preserving automorphism on
    $\mathcal{C}$.
  \item
    For every $g,h \in G$ and $c \in C_k$, we have that
    $g(h(c)) = (gh)(c)$.
  \item
    For every $g \in G$ and $c \in C_k$, we have that
    $\partial_k(g(c)) = g(\partial_k(c))$.
  \end{itemize}

  Let $C_d^G$ be the subgroup of $C_d$ generated by $\{c
  - g(c) : c 
  \in C_d, g \in G\}$ and let $\mathcal{C}^G$ be the corresponding
  chain complex. $\mathcal{C}^G$ is indeed a chain complex, because
  $\partial_d(c-g(c)) = \partial_d(c)-g(\partial_d(c)) \in C^G_{d-1}$
  whenever $c \in C_d$.
  Writing $C_d/G = C_d/C_d^G$, we obtain the
  quotient chain complex
  \[
  \begin{CD}
    \mathcal{C}/G :  \cdots @>{\partial_{d+1}}>> C_d/G
    @>{\partial_d}>> C_{d-1}/G @>{\partial_{d-1}}>> C_{d-2}/G 
    @>{\partial_{d-2}}>> \cdots
  \end{CD}
  \]
  In particular, we have the following exact sequence of homo\-logy
  groups for each $d$: 
  \[
  \begin{CD}
    H_{d+1}(\mathcal{C}/G) 
    \longrightarrow
    H_d(\mathcal{C}^G) 
    \longrightarrow
    H_d(\mathcal{C})
    @>{\pi^*_d}>>
    H_d(\mathcal{C}/G) 
    \longrightarrow
    H_{d-1}(\mathcal{C}^G);
  \end{CD}  
  \]
  $\pi^*_d$ is the map induced by the natural projection map $\pi_d :
  C_d \rightarrow C_d/G$.

  From now on, assume that $G$ is finite.
  For an element $c \in C_d$, let $[c]$ denote the corresponding
  element in $C_d/G$; $[c] = c + C_d^G$.
  Define $\groupsum{G}{c} = \sum_{g \in G} g(c)$. Clearly, 
  $\groupsum{G}{c} = 0$ for all $c \in C_d^G$ and
  $\groupsumvoid{G}$ commutes with $\partial_d$.
  Let $\varphi_d : C_d/G \rightarrow C_d$ be the homomorphism defined
  by $\varphi_d([c]) = \groupsum{G}{c}$. Since $\groupsumvoid{G}$
  vanishes on $C_d^G$, we have that $\varphi_d$ is well-defined.
  Moreover,  $\partial_d \circ \varphi_d = \varphi_{d-1} \circ
  \partial_d$, because 
  \[
  \partial_d(\varphi_d([c])) = \partial_d(\groupsum{G}{c}) =
  \groupsum{G}{\partial_d(c)} 
  = \varphi_{d-1}([\partial_d(c)]) 
  = \varphi_{d-1}(\partial_d([c])).
  \]

  Let $\varphi^*_d : H_d(\mathcal{C}/G)
  \rightarrow H_d(\mathcal{C})$ be the map induced by 
  $\varphi_d$; this is a well-defined homomorphism
  by the above discussion. 
  \begin{lemma}
    The kernel of $\varphi^*_d$ has finite exponent dividing $|G|$.
    As a consequence,
    if the torsion subgroup of
    $H_d(\mathcal{C})$ has finite exponent $e$ ($e=1$ if
    there is no torsion), then 
    the exponent of the torsion subgroup of $H_d(\mathcal{C}/G)$ is
    also finite and divides $|G|\cdot e$. 
    \label{Gtimesdsum-lem}  
  \end{lemma}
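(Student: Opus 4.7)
The plan is to reduce everything to a single identity: $\pi^*_d \circ \varphi^*_d = |G|\cdot \mathrm{id}$ on $H_d(\mathcal{C}/G)$. Once this is in hand, both assertions of the lemma drop out almost by inspection.

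First I would verify the chain-level statement $\pi_d\circ\varphi_d = |G|\cdot\mathrm{id}_{C_d/G}$. This is immediate from the definition of $C_d^G$: the element $c-g(c)$ lies in $C_d^G$ for every $c \in C_d$ and every $g\in G$, so $[g(c)] = [c]$ in $C_d/G$, and hence
\[
\pi_d(\varphi_d([c])) \;=\; \sum_{g\in G}[g(c)] \;=\; |G|\cdot[c].
\]
Passing to homology yields $\pi^*_d\circ\varphi^*_d = |G|\cdot\mathrm{id}_{H_d(\mathcal{C}/G)}$. For the first assertion, if $z\in\ker\varphi^*_d$ then applying $\pi^*_d$ to the equation $\varphi^*_d(z)=0$ gives $|G|\cdot z = 0$; hence $\ker\varphi^*_d$ has exponent dividing $|G|$, as claimed.

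For the consequence, I would carry out a one-step diagram chase: pick a torsion element $w \in H_d(\mathcal{C}/G)$. Its image $\varphi^*_d(w)$ is then a torsion element of $H_d(\mathcal{C})$ (any integer annihilating $w$ also annihilates $\varphi^*_d(w)$), so by hypothesis $e\cdot\varphi^*_d(w) = 0$, i.e.\ $ew \in \ker\varphi^*_d$. The first part now gives $|G|\cdot(ew) = 0$, showing that the torsion exponent of $H_d(\mathcal{C}/G)$ divides $|G|\cdot e$.

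There is no real obstacle here — the whole proof turns on the one-line computation $\pi_d\circ\varphi_d = |G|\cdot\mathrm{id}$. The only point demanding any care is the well-definedness of $\varphi_d$ on the quotient $C_d/G$, but this relies only on the fact that $\sum_{g\in G}g(\cdot)$ annihilates $C_d^G$, which the excerpt has already observed.
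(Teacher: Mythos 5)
Your proof is correct and is essentially the paper's own argument: the paper likewise establishes $\pi^*_d\circ\varphi^*_d=|G|\cdot\mathrm{id}$ via $[\varphi^*_d([c])]=[\groupsum{G}{c}]=|G|\cdot[c]$ and then derives both assertions from it. Your write-up is just a slightly more explicit version (e.g., noting that $\varphi^*_d$ sends torsion to torsion), so no further comment is needed.
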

  \begin{proof}
    Let $c \in H_d(\mathcal{C})$. Since 
    $[\varphi^*_d([c])] = [\groupsum{G}{c}] = |G|\cdot [c]$
    and $0 = [e\cdot \varphi^*_d([c])] = e\cdot |G|\cdot [c]$, we are
    done.
  \end{proof}

  \section{Detecting $5$-torsion in the homology of $\M[14]$}
  \label{match-sec}

  Let $\lambda = (\lambda_1, \ldots, \lambda_n)$ be a sequence of
  nonnegative integers summing to $N$. Define
  $\Symm{\lambda}$ to be the Young group $\Symm{\lambda_1} \times \cdots
  \times \Symm{\lambda_n}$. Write $[N]$ as a disjoint
  union $\bigcup_{i=1}^n U_i$ such that $|U_i| = \lambda_i$ for each
  $i$ and let $\Symm{\lambda_i}$ act on $U_i$ in the natural manner
  for each $i$. This yields an action of $\Symm{\lambda}$ on $[N]$,
  and this action induces an action on 
  the chain complex $\tilde{\mathcal{C}}(\M[N])$. In particular,
  we have the following result:
  \begin{lemma}
    \label{matchexp-lem}
    Let 
    $\varphi^*_d : H_d(\tilde{\mathcal{C}}(\M[N])/\Symm{\lambda})
    \rightarrow \tilde{H}_d(\M[N])$ be defined as in 
    Lemma~{\rm\ref{Gtimesdsum-lem}}.
    Then the kernel of $\varphi^*_d$ has finite exponent 
    dividing $\prod_{i=1}^n \lambda!$.
  \end{lemma}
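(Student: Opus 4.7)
The plan is to derive this statement as an immediate specialization of Lemma~\ref{Gtimesdsum-lem} to the group $G = \Symm{\lambda}$ acting on the reduced chain complex $\mathcal{C} = \tilde{\mathcal{C}}(\M[N])$. The only real content is to check that the three bullet-point axioms from Section~\ref{homology-sec} are satisfied by this action; once they are, the conclusion of Lemma~\ref{Gtimesdsum-lem} gives a kernel of exponent dividing $|G|$, and the product formula $|\Symm{\lambda}| = \prod_{i=1}^n \lambda_i!$ finishes the job.

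For the axiom check, I would proceed as follows. First, each $\sigma \in \Symm{\lambda} \subseteq \Symm{N}$ permutes the vertex set $[N]$, hence permutes edges $\{i,j\}$, and therefore sends any matching on $[N]$ to a matching on $[N]$; thus $\sigma$ acts as a simplicial automorphism of $\M[N]$. Extending by the standard sign convention on oriented simplices yields a degree-preserving automorphism of $\tilde{\mathcal{C}}(\M[N])$. Functoriality of this construction (the chain functor sends compositions to compositions and commutes with the simplicial boundary) immediately verifies the second and third bullets, namely $\sigma(\tau(c)) = (\sigma\tau)(c)$ and $\partial_d(\sigma(c)) = \sigma(\partial_d(c))$.

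With the hypotheses in place, I apply Lemma~\ref{Gtimesdsum-lem} directly to obtain that the kernel of the induced homomorphism
\[
\varphi^*_d : H_d(\tilde{\mathcal{C}}(\M[N])/\Symm{\lambda}) \longrightarrow \tilde{H}_d(\M[N])
\]
has finite exponent dividing $|\Symm{\lambda}|$. Since $\Symm{\lambda}$ is by definition the direct product $\Symm{\lambda_1} \times \cdots \times \Symm{\lambda_n}$, its order is $\prod_{i=1}^n \lambda_i!$, which is the claimed bound.

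There is no genuine obstacle here; the work has been packaged into the earlier Lemma~\ref{Gtimesdsum-lem}, and the present statement merely specializes the abstract result to the Young-group action on matching complexes. The only point meriting a moment's thought is that $\Symm{\lambda}$ really does act on the complex $\M[N]$ itself (not just on the ambient vertex set), but this follows at once from the fact that a vertex permutation carries matchings to matchings.
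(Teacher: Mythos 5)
Your proposal is correct and matches the paper's argument exactly: the paper also derives this as an immediate consequence of Lemma~\ref{Gtimesdsum-lem}, with the $\Symm{\lambda}$-action on $\tilde{\mathcal{C}}(\M[N])$ set up (induced from the action on $[N]$ preserving the blocks $U_i$) just before the lemma statement. Your explicit verification of the action axioms and of $|\Symm{\lambda}| = \prod_{i=1}^n \lambda_i!$ is simply a more detailed write-up of the same one-line specialization.
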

  \begin{proof}
    This is an immediate consequence of Lemma~\ref{Gtimesdsum-lem}.
  \end{proof}
  
  Let $\Delta_\lambda$ be the subfamily of $\M[N]$ consisting of 
  all $\sigma$ such that there are two distinct edges $ab$ and $cd$ in
  $\sigma$ with the property that $\{a,c\} \subseteq U_i$ and $\{b,d\}
  \subseteq U_j$ for some $i$ and $j$ (possibly equal).
  Write $\Gamma_\lambda = \M[N] \setminus \Delta_\lambda$; this is a
  simplicial complex.
  Define $\kappa : [N] \rightarrow [n]$ by $\kappa^{-1}(\{i\}) =
  U_i$. Extend $\kappa$ to $\Gamma_\lambda$ by defining
  \[
  \kappa(\{a_1b_1, \ldots, a_rb_r\}) = 
  \{\kappa(a_1)\kappa(b_1), \ldots, \kappa(a_r)\kappa(b_r)\}.
  \]
  \begin{lemma}
    \label{wellkappa-lem}
    We have that $\kappa$ is a dimension-preserving surjective map
    from $\Gamma_\lambda$ to $\BD{n}{\lambda}$.
  \end{lemma}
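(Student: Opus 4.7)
My plan is to decompose the claim into three verifications: (i) that $\kappa$ actually maps $\Gamma_\lambda$ into $\BD{n}{\lambda}$, (ii) that $|\kappa(\sigma)| = |\sigma|$ for every face $\sigma \in \Gamma_\lambda$ (which is what ``dimension-preserving'' amounts to), and (iii) that every $\tau \in \BD{n}{\lambda}$ arises as $\kappa(\sigma)$ for some $\sigma \in \Gamma_\lambda$.

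For (i) and (ii), the linchpin is that removing $\Delta_\lambda$ is exactly what is needed to rule out edge collisions under $\kappa$. Indeed, suppose distinct edges $ab, cd \in \sigma$ satisfied $\kappa(ab) = \kappa(cd)$; then, possibly after swapping $c$ and $d$, one has $\kappa(a) = \kappa(c) = i$ and $\kappa(b) = \kappa(d) = j$, placing $\sigma$ in $\Delta_\lambda$. Hence on $\Gamma_\lambda$ distinct edges of $\sigma$ produce distinct edges (or distinct loops) of $\kappa(\sigma)$, which simultaneously gives dimension preservation and the absence of repeated edges or loops in the image. For the degree bound, I would observe that the degree of $i$ in $\kappa(\sigma)$ equals the number of vertices of $U_i$ covered by an edge of $\sigma$: a non-loop image-edge incident to $i$ consumes one vertex of $U_i$ and contributes $1$, while an image-loop at $i$ consumes two vertices of $U_i$ and contributes $2$. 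Since $\sigma$ is a matching, this count is at most $|U_i| = \lambda_i$, so $\kappa(\sigma) \in \BD{n}{\lambda}$.

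For (iii), I would lift $\tau \in \BD{n}{\lambda}$ edge by edge: for each non-loop edge $ij$ of $\tau$, pick unused vertices $a \in U_i$ and $b \in U_j$ and put $ab$ into $\sigma$; for each loop at $i$, pick two unused vertices of $U_i$ and pair them. The hypothesis that the degree of $i$ in $\tau$ is at most $\lambda_i = |U_i|$, combined with the tally above, ensures that enough vertices of $U_i$ remain unused at every stage. The resulting $\sigma$ is automatically a matching, and since distinct edges of $\tau$ are lifted using disjoint vertex sets of $[N]$, no pair of edges of $\sigma$ can exhibit the $\Delta_\lambda$ pattern; thus $\sigma \in \Gamma_\lambda$ and $\kappa(\sigma) = \tau$ by construction.

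The only real subtlety throughout is the two-for-one bookkeeping for loops: an edge of $\sigma$ with both endpoints in the same $U_i$ produces a loop at $i$ that, by the paper's convention, contributes $2$ to the degree of $i$ in $\kappa(\sigma)$. Once this is made explicit, the degree computation in part (i) and the availability-of-vertices step in part (iii) both reduce cleanly to the statement that $\lambda_i$ slots in $U_i$ exactly suffice to realize or to witness any degree sequence bounded by $\lambda$.
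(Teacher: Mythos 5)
Your proof is correct and follows essentially the same route as the paper's own (much terser) argument: no edge collisions by the definition of $\Gamma_\lambda$ gives dimension preservation, the degree of $i$ in $\kappa(\sigma)$ is the number of matched vertices of $U_i$ and hence at most $\lambda_i$, and surjectivity by lifting one edge at a time. You merely fill in details the paper leaves implicit, such as the two-for-one loop bookkeeping and the check that the lift lands in $\Gamma_\lambda$.
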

  \begin{proof}
    To see that $\kappa$ is dimension-preserving, 
    note that $|\kappa(\sigma)| = |\sigma|$
    whenever $\sigma$ belongs to $\Gamma_\lambda$. 
    Namely, there are no multiple edges or multiple loops
    in $\kappa(\sigma)$ by definition of $\Gamma_\lambda$.
    Moreover, $\kappa(\sigma)$ belongs to $\BD{n}{\lambda}$,
    because for each $i \in [n]$, the degree in $\kappa(\sigma)$ of the
    vertex $i$ equals the sum of the degrees 
    in $\sigma$ of all vertices in $U_i$; this is at most
    $|U_i| = \lambda_i$.
    To prove surjectivity, use a simple induction argument over
    $\lambda$; remove one edge at a time from a given graph in
    $\BD{n}{\lambda}$.  
  \end{proof}

  \begin{lemma}
    For $\sigma, \tau \in \Gamma_\lambda$, we have that
    $\kappa(\sigma) = \kappa(\tau)$ if and only if  
    there is a $g$ in $\Symm{\lambda}$ such that $g(\sigma) = \tau$.
    \label{kappa-lem}
  \end{lemma}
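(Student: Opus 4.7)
My plan is to prove the two implications separately. The ``if'' direction is immediate from the definitions: every $g = (g_1, \ldots, g_n) \in \Symm{\lambda}$ preserves each block $U_i$ setwise, hence $\kappa(g(v)) = \kappa(v)$ for every $v \in [N]$, and therefore $\kappa(g(\sigma)) = \kappa(\sigma)$.

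For the converse, the key observation is that, by the very definition of $\Gamma_\lambda$, the map $\kappa$ restricted to the edge set of $\sigma$ is injective (and likewise for $\tau$). So if $\kappa(\sigma) = \kappa(\tau)$, then each edge $f$ of $\kappa(\sigma)$ has a unique preimage $f_\sigma \in \sigma$ and a unique preimage $f_\tau \in \tau$. I will build the desired group element one factor at a time: for each $i \in [n]$, I define $g_i \in \Symm{\lambda_i}$, acting on $U_i$, so that $g = (g_1, \ldots, g_n)$ sends each $f_\sigma$ to the corresponding $f_\tau$. For an edge $f = \{i,j\}$ of $\kappa(\sigma)$ with $j \neq i$, the preimages $f_\sigma = \{a,b\}$ and $f_\tau = \{a',b'\}$ have unique endpoints $a, a' \in U_i$, and I set $g_i(a) = a'$; for a loop $f = \{i,i\}$, which occurs at most once in $\kappa(\sigma)$ by the $\Gamma_\lambda$ condition, both $f_\sigma$ and $f_\tau$ lie entirely in $U_i$, and I send $f_\sigma$ bijectively onto $f_\tau$ (either of the two choices works).

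Because $\sigma$ and $\tau$ are matchings, distinct edges meet $U_i$ in disjoint vertices, so these partial assignments on the $U_i$-side are well-defined and injective; the same holds on the target side. Since $i$ has the same degree in $\kappa(\sigma)$ and in $\kappa(\tau)$, the numbers of vertices of $U_i$ covered by $\sigma$ and by $\tau$ agree, so the partial bijection extends to a full permutation $g_i$ of $U_i$. I do not foresee a real obstacle: the main point is simply that the hypothesis $\sigma, \tau \in \Gamma_\lambda$ is precisely what rules out the ambiguity---two edges of $\sigma$ (or of $\tau$) sharing a common $\kappa$-image---that would otherwise make the construction of $g_i$ ill-defined, and after that the verification $g(\sigma) = \tau$ is a routine edge-by-edge check.
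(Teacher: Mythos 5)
Your proof is correct and follows essentially the same route as the paper's: match the edges of $\sigma$ and $\tau$ via $\kappa$ (which is injective on each edge set precisely because $\sigma,\tau\notin\Delta_\lambda$), define $g$ on the covered vertices edge by edge, and extend to a block-preserving permutation of $[N]$. The paper states this more tersely; your version just makes explicit the well-definedness and extendability checks that the paper leaves to the reader.
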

  \begin{proof}
    Clearly, $\kappa(\sigma) = \kappa(g(\sigma))$ for all $\sigma
    \in \Gamma_\lambda$ and $g \in \Symm{\lambda}$. For the other
    direction, 
    write $\sigma = \{a_jb_j : j \in J\}$ and
    $\tau = \{a'_jb'_j : j \in J\}$, where 
    $\kappa(a_j) = \kappa(a'_j)$ and 
    $\kappa(b_j) = \kappa(b'_j)$ for each $j \in J$.
    Define $g(a_j) = a'_j$ and $g(b_j) = b'_j$ for each $j \in J$ and
    extend $g$ to a permutation on $[N]$ such that $g(U_i) = U_i$ for
    each $i \in [n]$. One easily checks that $g$ has the desired
    properties.
  \end{proof}

  For a set $\sigma$, we let $\oriented{\sigma}$ denote the oriented
  simplex corresponding to $\sigma$; fixing an order of the elements
  in $\sigma$, this is well-defined.
  Given an oriented simplex $\oriented{\sigma} = a_1b_1
  \wedge \cdots \wedge a_rb_r$, we define 
  \[
  \kappa(\oriented{\sigma}) = \kappa(a_1)\kappa(b_1)\wedge \cdots
  \wedge \kappa(a_r)\kappa(b_r), 
  \]
  thereby preserving orientation. Extend $\kappa$ linearly to a
  homomorphism $\tilde{\mathcal{C}}(\Gamma_\lambda) \rightarrow
  \tilde{\mathcal{C}}(\BD{n}{\lambda})$.
  \begin{lemma}
    \label{hatkappa-lem}
    The map $\hat{\kappa} :
    \tilde{\mathcal{C}}(\Gamma_\lambda)/\Symm{\lambda} \rightarrow
    \tilde{\mathcal{C}}(\BD{n}{\lambda})$ defined as
    $\hat{\kappa}([c]) = \kappa(c)$ 
    is a chain complex isomorphism.
  \end{lemma}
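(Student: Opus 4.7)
The plan is to verify three things about $\hat{\kappa}$: that it is well-defined on the quotient, that it commutes with the boundary, and that it is a bijection on free generators. These together give the desired chain complex isomorphism.

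Well-definedness is immediate: every $g \in \Symm{\lambda}$ preserves each block $U_i$ setwise, so $\kappa(g(a)) = \kappa(a)$ for all $a \in [N]$, and applied edgewise this yields $\kappa(g(\oriented{\sigma})) = \kappa(\oriented{\sigma})$. Thus $\kappa$ vanishes on every generator $c - g(c)$ of the subgroup we quotient by, and descends to the quotient as $\hat{\kappa}$. Compatibility with the boundary is routine, since $\kappa$ acts independently on each edge: deleting an edge from $\oriented{\sigma}$ either before or after applying $\kappa$ produces the same oriented simplex in $\BD{n}{\lambda}$. Surjectivity is handed to us by Lemma~\ref{wellkappa-lem}: any oriented simplex $\oriented{\tau}$ of $\BD{n}{\lambda}$ lifts to some $\sigma \in \Gamma_\lambda$ with $\kappa(\sigma) = \tau$, and choosing a compatible orientation gives $\hat{\kappa}([\oriented{\sigma}]) = \oriented{\tau}$.

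The real content, and the main obstacle, lies in injectivity. By Lemma~\ref{kappa-lem}, distinct $\Symm{\lambda}$-orbits of unoriented simplices in $\Gamma_\lambda$ land in distinct simplices of $\BD{n}{\lambda}$, so the only remaining danger is that some stabilizer element $g$ of $\sigma$ might act on $\oriented{\sigma}$ by sign reversal; this would collapse $[\oriented{\sigma}]$ into a nontrivial $2$-torsion class of the quotient and break the bijection on free generators. To rule this out, I would argue as follows. Suppose $g(\sigma) = \sigma$ setwise, and that $g$ carries two distinct edges $a_1b_1$ and $a_2b_2$ of $\sigma$ to each other, so $\{g(a_1),g(b_1)\} = \{a_2,b_2\}$. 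Since $g$ preserves the partition $\bigcup_i U_i$, matching endpoints up in either order forces either $\{a_1,a_2\} \subseteq U_i$ and $\{b_1,b_2\} \subseteq U_j$, or $\{a_1,b_2\} \subseteq U_i$ and $\{b_1,a_2\} \subseteq U_j$, for suitable $i,j$. Either configuration places $\sigma$ in $\Delta_\lambda$, contradicting $\sigma \in \Gamma_\lambda$.

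Consequently every element of the stabilizer fixes each edge of $\sigma$ as a vertex of the matching complex, so the induced permutation on the vertices of the simplex $\sigma$ is trivial and $g(\oriented{\sigma}) = \oriented{\sigma}$. Hence each orbit contributes a single free generator to $\tilde{\mathcal{C}}(\Gamma_\lambda)/\Symm{\lambda}$, and $\hat{\kappa}$ sends these bijectively to the oriented simplices of $\BD{n}{\lambda}$. This bijection of bases, combined with the chain-map property, gives the isomorphism of chain complexes.
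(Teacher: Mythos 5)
Your proof is correct, and its overall skeleton (well-definedness, compatibility with $\partial$, surjectivity via Lemma~\ref{wellkappa-lem}, then injectivity as the substantive point) matches the paper's. The one place where you genuinely diverge is injectivity. The paper constructs an explicit inverse $\mu : \tilde{\mathcal{C}}(\BD{n}{\lambda}) \rightarrow \tilde{\mathcal{C}}(\Gamma_\lambda)/\Symm{\lambda}$, sending $c'$ to $[c]$ for any preimage $c$, and invokes Lemma~\ref{kappa-lem} for well-definedness; the orientation consistency needed there (that the $g$ carrying $\sigma$ to $\tau$ also carries $\oriented{\sigma}$ to $+\oriented{\tau}$ and not $-\oriented{\tau}$) is left implicit. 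You instead show directly that each $\Symm{\lambda}$-orbit contributes a single free generator to the quotient, by proving that a stabilizer element of $\sigma \in \Gamma_\lambda$ cannot permute the edges of $\sigma$ nontrivially: if $g$ sent $a_1b_1$ to a distinct edge $a_2b_2$, the block-preserving property of $g$ would force one of the two configurations defining $\Delta_\lambda$. This is exactly the point where the definition of $\Gamma_\lambda$ earns its keep, and your version makes explicit the sign/$2$-torsion issue that the paper's appeal to the unoriented Lemma~\ref{kappa-lem} glosses over (compare the Remark after Theorem~\ref{split-thm}, where precisely this orientation reversal produces $2$-torsion on $\Delta_\lambda$). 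Your route costs a short case analysis but yields a cleaner structural conclusion (the quotient is free on orbit representatives); the paper's route is shorter on the page but leans harder on the reader to check orientations. Both are valid.
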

  \begin{proof}
    First of all, one easily checks that $\hat{\kappa}$ is
    well-defined and commutes with the boundary operator; for the
    former property, note that
    $\kappa(\oriented{\sigma}) = \kappa(g(\oriented{\sigma}))$  
    for all $g \in \Symm{\lambda}$ and $\sigma \in
    \Gamma_\lambda$. Moreover, $\hat{\kappa}$ is surjective, 
    because $\kappa$ is surjective by Lemma~\ref{wellkappa-lem}. Finally,
    to see that $\hat{\kappa}$ is 
    injective, define $\mu : \tilde{\mathcal{C}}(\BD{n}{\lambda})
    \rightarrow \tilde{\mathcal{C}}(\Gamma_\lambda)/\Symm{\lambda}$ 
    as $\mu(c') = [c]$, where $c$ is any element
    in $\tilde{\mathcal{C}}(\Gamma_\lambda)$
    such that $\kappa(c) = c'$; this is well-defined by
    Lemma~\ref{kappa-lem}. Since 
    $\mu \circ \hat{\kappa}([c]) = \mu(\kappa(c)) 
    = [c]$,
    injectivity follows.
  \end{proof}
  
  \begin{theorem}
    \label{split-thm}
    We have the chain complex isomorphism
    \[
    \tilde{\mathcal{C}}(\M[N])/\Symm{\lambda} 
    \cong
    \tilde{\mathcal{C}}(\BD{n}{\lambda}) \oplus
    \tilde{\mathcal{C}}(\Delta_\lambda)/\Symm{\lambda}.
    \]
  \end{theorem}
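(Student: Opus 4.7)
The plan is to exploit the fact that $\Gamma_\lambda$ is a $\Symm{\lambda}$-invariant subcomplex of $\M[N]$, giving the $\Symm{\lambda}$-equivariant splitting
\begin{equation*}
\tilde{\mathcal{C}}(\M[N]) \;=\; \tilde{\mathcal{C}}(\Gamma_\lambda) \,\oplus\, \tilde{\mathcal{C}}(\Delta_\lambda)
\end{equation*}
of \emph{graded} abelian groups, where $\tilde{\mathcal{C}}(\Delta_\lambda)$ is interpreted as the relative chain complex with basis the oriented simplices on $\Delta_\lambda$. This splitting automatically descends to $\Symm{\lambda}$-coinvariants, and by Lemma~\ref{hatkappa-lem} the first summand becomes $\tilde{\mathcal{C}}(\BD{n}{\lambda})$. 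The task therefore reduces to upgrading the graded splitting to a chain-complex isomorphism; equivalently, to showing that the off-diagonal component $\partial_{\Delta\to\Gamma}$ of the boundary of $\tilde{\mathcal{C}}(\M[N])$ vanishes modulo $\Symm{\lambda}$.

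Given $\oriented{\sigma}$ with $\sigma\in\Delta_\lambda$, a term $\pm\oriented{(\sigma\setminus\{e\})}$ of $\partial\oriented{\sigma}$ lies in $\tilde{\mathcal{C}}(\Gamma_\lambda)$ precisely when $\sigma\setminus\{e\}\in\Gamma_\lambda$. Since $\sigma\in\Delta_\lambda$ already carries at least one pair of edges with a common $\kappa$-image, this occurs exactly when $\sigma$ has a \emph{unique} such pair $\{e,e'\}$ and $e$ belongs to it; otherwise a further pair survives in $\sigma\setminus\{e\}$. Hence the $\Gamma_\lambda$-part of $\partial\oriented{\sigma}$ consists of exactly two terms, indexed by $e$ and $e'$, and the sets $\sigma\setminus\{e\}$ and $\sigma\setminus\{e'\}$ have identical $\kappa$-images, so by Lemma~\ref{kappa-lem} they lie in a single $\Symm{\lambda}$-orbit. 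A natural realizer is the product of transpositions swapping the endpoints of $e$ with those of $e'$ inside their respective parts $U_i$ and $U_j$ (or inside a single $U_i$ when $\kappa(e)=\kappa(e')$ is a loop); since $\sigma$ is a matching, this $g$ fixes every other edge of $\sigma$.

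The main obstacle is the sign bookkeeping. Writing $\oriented{\sigma}=e_1\wedge\cdots\wedge e_r$ with $e=e_k$, $e'=e_l$, and $k<l$, applying $g$ to $\oriented{\sigma\setminus\{e_k\}}$ replaces the factor $e_l$ (at position $l-1$ of the wedge) by $e_k$; restoring the standard wedge order that matches $\oriented{\sigma\setminus\{e_l\}}$ requires $l-k-1$ adjacent transpositions, whence
\begin{equation*}
g\bigl(\oriented{\sigma\setminus\{e_k\}}\bigr) \;=\; (-1)^{l-k-1}\,\oriented{\sigma\setminus\{e_l\}}.
\end{equation*}
Combined with the boundary signs $(-1)^{k-1}$ and $(-1)^{l-1}$, the two $\Gamma_\lambda$-terms of $\partial\oriented{\sigma}$ sum to $\bigl((-1)^{l-2}+(-1)^{l-1}\bigr)\oriented{\sigma\setminus\{e_l\}}=0$ in $\tilde{\mathcal{C}}(\Gamma_\lambda)/\Symm{\lambda}$. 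A short case check is needed when $\kappa(e)$ is a loop so that all four endpoints of $e,e'$ lie in a common $U_i$, but the same transposition product works there. With $\partial_{\Delta\to\Gamma}$ thus annihilating the quotient, the graded direct sum is promoted to a chain-complex isomorphism and the theorem follows.
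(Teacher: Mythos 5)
Your proof is correct and follows essentially the same route as the paper: both reduce to showing that, modulo the $\Symm{\lambda}$-action, the boundary of an oriented simplex of $\Delta_\lambda$ has no component in $\tilde{\mathcal{C}}(\Gamma_\lambda)$, and both cancel the two offending terms via the double transposition swapping the endpoints of the conflicting edges. The paper merely avoids your explicit sign bookkeeping by writing $\oriented{\sigma} = a_1b_1 \wedge a_2b_2 \wedge \oriented{\tau}$ with the conflicting pair placed first, which also handles the case of several conflicting pairs without a separate case distinction.
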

  \begin{proof}
    By Lemma~\ref{hatkappa-lem}, it suffices to prove that
    \[
    \tilde{\mathcal{C}}(\M[N])/\Symm{\lambda} 
    \cong
    \tilde{\mathcal{C}}(\Gamma_\lambda)/\Symm{\lambda} \oplus
    \tilde{\mathcal{C}}(\Delta_\lambda)/\Symm{\lambda}.
    \]
    Clearly, the boundary in
    $\tilde{\mathcal{C}}(\M[N])/\Symm{\lambda}$ of any element in
    $\tilde{\mathcal{C}}(\Gamma_\lambda)/\Symm{\lambda}$ is again an
    element in $\tilde{\mathcal{C}}(\Gamma_\lambda)/\Symm{\lambda}$, 
    $\Gamma_\lambda$ being a subcomplex of $\M[N]$.
    It remains to prove that
    $[\partial(\oriented{\sigma})]  
    \in \tilde{\mathcal{C}}(\Delta_\lambda)/\Symm{\lambda}$
    for each $\sigma \in \Delta_\lambda$. 
    Write $\oriented{\sigma} = a_1b_1 \wedge a_2b_2 \wedge
    \oriented{\tau}$, where
    $a_1,a_2 \in U_i$ and $b_1, b_2 \in U_j$ for some 
    $i$ and $j$.
        We obtain that
    \[
    \partial(\oriented{\sigma})
    = a_2b_2 \wedge \oriented{\tau} - a_1b_1 \wedge \oriented{\tau}
    + a_1b_1\wedge a_2b_2 \wedge \partial(\oriented{\tau}).
    \]
    Since the group element $(a_1,a_2)(b_1,b_2)$ belongs to
    $\Symm{\lambda}$ and transforms $a_1b_1 \wedge \oriented{\tau}$
    into $a_2b_2 \wedge \oriented{\tau}$, it follows that
    \[
    \partial([\oriented{\sigma}])
    = \left[a_1b_1\wedge a_2b_2 \wedge
    \partial(\oriented{\tau})\right],
    \]
    which is indeed an element in 
    $\tilde{\mathcal{C}}(\Delta_\lambda)/\Symm{\lambda}$.
  \end{proof}

  \noindent
  \emph{Remark.}
  One may note that
  $\tilde{\mathcal{C}}(\Delta_\lambda)/\Symm{\lambda}$ is a chain
  complex of elementary $2$-groups. Namely, with notation as in the
  above proof,  
  we have that $(a_1,a_2)(b_1,b_2)$ maps $\oriented{\sigma} =
  a_1b_1\wedge a_2b_2 \wedge \oriented{\tau}$ to $a_2b_2\wedge a_1b_1
  \wedge \oriented{\tau} = -\oriented{\sigma}$.
  As a consequence, 
  $[\oriented{\sigma}] = -[\oriented{\sigma}]$, which
  implies that $2[\oriented{\sigma}] = 0$.

  \begin{theorem}
    There is a homomorphism
    $\tilde{H}_d(\BD{n}{\lambda}) \rightarrow
    \tilde{H}_d(\M[N])$ 
    such that the kernel has finite exponent dividing 
    $\prod_{i=1}^n \lambda!$.
    \label{mbd-thm}
  \end{theorem}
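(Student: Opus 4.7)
The plan is to combine Theorem~\ref{split-thm} with Lemma~\ref{matchexp-lem}; everything we need is already in place, so the argument is essentially a chase of direct summands. First, the chain-complex isomorphism in Theorem~\ref{split-thm} induces a direct sum decomposition of the $d$th homology,
\[
H_d\bigl(\tilde{\mathcal{C}}(\M[N])/\Symm{\lambda}\bigr)
\cong
\tilde{H}_d(\BD{n}{\lambda}) \oplus
H_d\bigl(\tilde{\mathcal{C}}(\Delta_\lambda)/\Symm{\lambda}\bigr).
\]
In particular, the first summand injects into $H_d(\tilde{\mathcal{C}}(\M[N])/\Symm{\lambda})$ via the canonical inclusion $\iota$ coming from the splitting.

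Second, Lemma~\ref{matchexp-lem} supplies a homomorphism
\[
\varphi^*_d : H_d\bigl(\tilde{\mathcal{C}}(\M[N])/\Symm{\lambda}\bigr) \longrightarrow \tilde{H}_d(\M[N])
\]
whose kernel has finite exponent dividing $|\Symm{\lambda}| = \prod_{i=1}^n \lambda_i!$. Define the desired map as the composite $\psi := \varphi^*_d \circ \iota$. Since $\iota$ is injective, the kernel of $\psi$ is carried isomorphically onto the intersection of $\iota\bigl(\tilde{H}_d(\BD{n}{\lambda})\bigr)$ with $\ker \varphi^*_d$, which is a subgroup of $\ker \varphi^*_d$ and therefore also has exponent dividing $\prod_{i=1}^n \lambda_i!$. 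This gives the theorem.

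There is no genuine obstacle here: the nontrivial content is distributed across Lemma~\ref{Gtimesdsum-lem} (which bounds the kernel of the averaging map $\varphi^*_d$ by $|G|$) and Theorem~\ref{split-thm} (which identifies $\tilde{H}_d(\BD{n}{\lambda})$ as a genuine summand of the quotient complex's homology, not merely a subquotient). The only point that warrants a moment's care is that the splitting in Theorem~\ref{split-thm} is one of chain complexes, so that applying $H_d$ produces a bona fide direct sum on homology and hence a well-defined injection $\iota$; this is exactly what was verified in that theorem's proof.
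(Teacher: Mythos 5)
Your proof is correct and is exactly the argument the paper intends: the paper's proof of this theorem is the one-line remark that it ``follows immediately'' from Lemma~\ref{matchexp-lem} and Theorem~\ref{split-thm}, and you have simply spelled out the composite $\varphi^*_d \circ \iota$ and the containment of its kernel in $\ker\varphi^*_d$. No differences in approach to report.
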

  \begin{proof}
    This follows immediately from Lemma~\ref{matchexp-lem} and
    Theorem~\ref{split-thm}.
  \end{proof}
  Let us summarize the situation.
  \begin{corollary}
    We have a long exact sequence
    \[
    \begin{CD}
      & & & \cdots &
      @>>>  
      H_{d+1}(\tilde{\mathcal{C}}(\M[N])/\Symm{\lambda}) 
      \\ 
      @>>>  
      H_d(\tilde{\mathcal{C}}(\M[N])^{\Symm{\lambda}})
      @>>>  
      \tilde{H}_d(\M[N])
      @>\pi^*_d>>  
      H_d(\tilde{\mathcal{C}}(\M[N])/\Symm{\lambda})
      \\
      @>>>  
      H_{d-1}(\tilde{\mathcal{C}}_d(\M[N])^{\Symm{\lambda}})
      @>>> 
      \cdots ,
    \end{CD}
    \]
    where
    \[
    H_{d}(\tilde{\mathcal{C}}(\M[N])/\Symm{\lambda}) 
    \cong
    \tilde{H}_d(\BD{n}{\lambda}) \oplus
    H_d(\tilde{\mathcal{C}}(\Delta_\lambda)/\Symm{\lambda})
    \]
    and 
    $\pi^*_d$ has an ``inverse'' $\varphi^*_d$ satisfying
    $\pi^*_d \circ \varphi^*_d = \prod_{i=1}^n \lambda! \cdot {\rm id}$.
    In particular, if $\prod_{i=1}^n \lambda!$ is a
    unit in the underlying coefficient ring, then 
    \[
    \begin{CD}
      \tilde{H}_d(\M[N]) \cong
      \tilde{H}_d(\BD{n}{\lambda}) \oplus
      H_d(\tilde{\mathcal{C}}(\M[N])^{\Symm{\lambda}}).
    \end{CD}
    \]
    \label{mbdexact-cor}
 \end{corollary}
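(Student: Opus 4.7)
The corollary is a packaging of material already assembled, so the plan is just to glue the pieces together in the right order. First I specialize the general framework of Section~\ref{homology-sec} to the action of $G = \Symm{\lambda}$ on $\mathcal{C} = \tilde{\mathcal{C}}(\M[N])$; this gives the long exact sequence verbatim. Then I pass to homology in the chain-complex isomorphism of Theorem~\ref{split-thm}, using Lemma~\ref{hatkappa-lem} to identify $\tilde{\mathcal{C}}(\Gamma_\lambda)/\Symm{\lambda}$ with $\tilde{\mathcal{C}}(\BD{n}{\lambda})$, to obtain the asserted decomposition
\[
H_d(\tilde{\mathcal{C}}(\M[N])/\Symm{\lambda}) \cong \tilde{H}_d(\BD{n}{\lambda}) \oplus H_d(\tilde{\mathcal{C}}(\Delta_\lambda)/\Symm{\lambda}).
\]

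Next I define $\varphi^*_d$ to be the map on homology induced by the chain map $\varphi_d$ from Section~\ref{homology-sec}. The relation $\pi^*_d \circ \varphi^*_d = \prod_{i=1}^n \lambda_i! \cdot {\rm id}$ is then a transcription of the chain-level identity $\pi_d(\varphi_d([c])) = [\groupsum{\Symm{\lambda}}{c}] = |\Symm{\lambda}| \cdot [c]$ established inside the proof of Lemma~\ref{Gtimesdsum-lem}, together with $|\Symm{\lambda}| = \prod_{i=1}^n \lambda_i!$.

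It remains to handle the case where $\prod_{i=1}^n \lambda_i!$ is a unit in the coefficient ring. In that case, multiplication by $\prod \lambda_i!$ is an automorphism of each $H_d(\tilde{\mathcal{C}}(\M[N])/\Symm{\lambda})$, so $\pi^*_d$ has the right inverse $\varphi^*_d / \prod \lambda_i!$ and is surjective for every $d$. Hence all connecting homomorphisms in the long exact sequence vanish and the sequence breaks into split short exact sequences
\[
0 \to H_d(\tilde{\mathcal{C}}(\M[N])^{\Symm{\lambda}}) \to \tilde{H}_d(\M[N]) \to H_d(\tilde{\mathcal{C}}(\M[N])/\Symm{\lambda}) \to 0,
\]
with splitting $\varphi^*_d / \prod \lambda_i!$.

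The one point that demands a moment of thought—the closest thing to an obstacle here—is the disappearance of the $\Delta_\lambda$-summand. If $\Symm{\lambda}$ is trivial then the whole corollary is vacuous; otherwise some $\lambda_i \ge 2$, so $2$ divides the unit $\prod \lambda_i!$ and is itself a unit. By the Remark following Theorem~\ref{split-thm}, the chain complex $\tilde{\mathcal{C}}(\Delta_\lambda)/\Symm{\lambda}$ consists of elementary $2$-groups and therefore has vanishing homology over any ring in which $2$ is invertible. Substituting this into the decomposition above yields the claimed isomorphism $\tilde{H}_d(\M[N]) \cong \tilde{H}_d(\BD{n}{\lambda}) \oplus H_d(\tilde{\mathcal{C}}(\M[N])^{\Symm{\lambda}})$, completing the proof.
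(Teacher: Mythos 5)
Your argument is correct and is essentially the one the paper intends: the long exact sequence and the identity $\pi^*_d\circ\varphi^*_d=|\Symm{\lambda}|\cdot\mathrm{id}$ are the specialization of Section~\ref{homology-sec} to $G=\Symm{\lambda}$, the direct-sum decomposition is Theorem~\ref{split-thm} read in homology via Lemma~\ref{hatkappa-lem}, and the splitting plus the vanishing of the $\Delta_\lambda$-summand give the final isomorphism. The only cosmetic point is your claim that the corollary is ``vacuous'' when $\Symm{\lambda}$ is trivial; it is not vacuous but trivially true there, since $\lambda=(1,\ldots,1)$ forces $\Delta_\lambda=\emptyset$, which is exactly the alternative case the paper's closing remark covers.
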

  For the final statement, note that 
  $\tilde{\mathcal{C}}(\Delta_\lambda)/\Symm{\lambda}$
  is zero if $2$ is a unit in the underlying coefficient ring or if
  $\lambda = (1, \ldots, 1)$.

  \begin{proof}[Proof of Theorem~{\rm\ref{main-thm}}]
    By Theorem~\ref{ShWa-thm}, we already know that there are elements
    of order three in $\tilde{H}_{4}(\M[14];\ZZ)$
    and that the group is finite.
    Applying Theorem~\ref{andersen-thm}, we obtain that the exponent
    of $\tilde{H}_4(\BD{7}{2};\ZZ)$ is five. 
    Selecting $\lambda = (2,2,2,2,2,2,2)$ and noting that
    $\prod_i \lambda_i! = 128$ and $\gcd(5,128) = 1$, we are done 
    by Theorem~\ref{mbd-thm}.
  \end{proof}

  Let $(2^a1^b)$ denote the sequence consisting of $a$ occurrences of
  the value 2 and $b$ occurrences of the value 1.
  One may try to obtain further information about the homology of
  $\M[14]$ by computing the homology of $\BD{14-a}{(2^a1^{14-2a})}$
  for $a \le 6$.
  The ideal, of course, would be to compute the homology of $\M[14]$
  directly, but this appears to be beyond the capacity of today's
  (standard) computers. Using the computer program {\tt 
  CHomP} \cite{Pilar}, we managed to compute the
  $\ZZ_p$-homology of $\BD{8}{(2^6 1^2)}$ for $p \in \{2,3,5\}$, and
  the results suggest that $\tilde{H}_4(\BD{8}{(2^6 1^2)};
  \ZZ) \cong \tilde{H}_4(\BD{7}{2};\ZZ) \cong \ZZ_5$. In particular,
  it seems that we cannot gather any  
  additional information about the homology of $\M[14]$ from that
  of $\BD{8}{(2^61^2)}$. 

  Via a calculation with the {\tt Homology}
  computer program \cite{Homoprog}, we discovered that 
  \[
  \tilde{H}_4(\BD{11}{(2^2 1^9)}; \ZZ) \cong \ZZ_3^{10} \oplus
  \ZZ^{6142}.
  \]
  By Theorem~\ref{mbd-thm} and well-known properties of the 
  rational homology of $\M[13]$ \cite{Bouc}, this yields the following
  result:
  \begin{proposition}
    We have that $\tilde{H}_{4}(\M[13];\ZZ) \cong T \oplus
    \ZZ^{24596}$, where $T$ is a finite group containing 
    $\ZZ_3^{10}$ as a subgroup.
    \label{m13-prop}
  \end{proposition}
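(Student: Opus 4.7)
The plan is to feed the quoted computer calculation $\tilde{H}_4(\BD{11}{(2^21^9)}; \ZZ) \cong \ZZ_3^{10} \oplus \ZZ^{6142}$ into Theorem~\ref{mbd-thm} with $\lambda = (2,2,1,1,1,1,1,1,1,1,1)$, so that $N = 13$ and $n = 11$, and then combine the resulting torsion information with Bouc's known rational Betti numbers for $\M[13]$.

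First, I would instantiate Theorem~\ref{mbd-thm} at $d = 4$ with the above $\lambda$ to produce a homomorphism
\[
\varphi^*_4 : \tilde{H}_4(\BD{11}{(2^21^9)}; \ZZ) \longrightarrow \tilde{H}_4(\M[13]; \ZZ)
\]
whose kernel has exponent dividing $\prod_{i=1}^{11} \lambda_i! = 2! \cdot 2! = 4$. Since $\gcd(3,4) = 1$, the restriction of $\varphi^*_4$ to the order-three subgroup $\ZZ_3^{10}$ of $\tilde{H}_4(\BD{11}{(2^21^9)}; \ZZ)$ is injective, so $\tilde{H}_4(\M[13]; \ZZ)$ contains a copy of $\ZZ_3^{10}$.

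Next I would invoke the fact that $\M[13]$ is a finite simplicial complex, so its integral homology is finitely generated, giving a decomposition $\tilde{H}_4(\M[13]; \ZZ) \cong T \oplus \ZZ^{r}$ with $T$ finite and $r$ equal to $\dim_\QQ \tilde{H}_4(\M[13];\QQ)$. Bouc's computation of the rational homology of matching complexes \cite{Bouc} supplies $r = 24596$. The injective image of $\ZZ_3^{10}$ necessarily lies in the torsion subgroup $T$, yielding the stated conclusion.

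There is no genuine obstacle here: both the torsion input (the computer calculation of $\tilde{H}_4(\BD{11}{(2^21^9)};\ZZ)$) and the free input (Bouc's rational Betti number) are imported from the literature, and the only substantive step is the elementary coprimality observation $\gcd(3, \prod_i \lambda_i!) = 1$ that lets Theorem~\ref{mbd-thm} transport $3$-torsion from $\BD{11}{(2^21^9)}$ to $\M[13]$.
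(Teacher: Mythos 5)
Your proposal is correct and follows essentially the same route as the paper: apply Theorem~\ref{mbd-thm} with $\lambda = (2,2,1^9)$ so that the kernel of the map $\tilde{H}_4(\BD{11}{(2^21^9)};\ZZ) \to \tilde{H}_4(\M[13];\ZZ)$ has exponent dividing $4$, which is coprime to $3$, and then read off the free rank $24596$ from Bouc's rational computation. The paper leaves these details implicit ("this follows immediately"), so your write-up is simply a fuller version of the intended argument.
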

  See Tables~\ref{matchbd1-fig} and \ref{matchbd2-fig} for more
  information about torsion in the homology of 
  $\BD{a+b}{2^a1^{b}}$ for small values of $a$ and $b$.
  The numerical data in Table~\ref{matchbd1-fig} suggests that the
  Sylow $3$-subgroup of $\tilde{H}_{(n-5)/2}(\BD{n-a}{2^a1^{n-2a}};
  \ZZ)$ is an elementary $3$-group of rank $\binom{n-a-1}{(n+5)/2}$.
  
  \begin{table}
    \caption{Torsion subgroup of $\tilde{H}_i(\BD{n-a}{2^a1^{n-2a}}; \ZZ)$
      for $n = 2i+5$.} 
    \begin{footnotesize}
    \begin{center}
        \begin{tabular}{|r||c|c|c|c|c|c|c|c|}
          \hline
          & & & & & & & &
          \\[-1.5ex]
          &  $a=0$ & \ 1 \ & \ 2 \ & \ 3 \ & \ 4 \ & \ 5 \ & \ 6\ & \ 7 \
          \\
          \hline
          \hline
          & & & & & & & &
          \\[-2ex]
          $n = 3$ & $0$ & $0$ & - & - & - & - & - & - \\
          \hline
          & & & & & & & &
          \\[-2ex]
          $5$ & $0$ & $0$ & $0$ & - & - & - & - & - \\
          \hline
          & & & & & & & &
          \\[-2ex]
          $7$ & $\ZZ_3$ & $0$ & $0$ & $0$ & - & - & - & - \\
          \hline
          & & & & & & & &
          \\[-2ex]
          $9$ & $\ZZ_3^8$ & $\ZZ_3$ & $0$ & $0$ & $0$ & - & - & - \\
          \hline
          & & & & & & & &
          \\[-2ex]
          $11$ & $\ZZ_3^{45}$ & $\ZZ_3^9$ & $\ZZ_3$ & $0$ & $0$ &  $0$
          & - & - \\ 
          \hline
          & & & & & & & &
          \\[-2ex]
          $13$  & ? & ? & $\ZZ_3^{10}$ & $\ZZ_3$ & $0$ & $0$ & $0$ & - \\
          \hline
          & & & & & & & &
          \\[-2ex]
          $15$  & ? & ? & ? & ? & ? & $\ZZ_2$ & $0$ & $0$ \\
          \hline
        \end{tabular}
    \end{center}
    \end{footnotesize}
    \label{matchbd1-fig}
  \end{table}

  \begin{table}
    \caption{Torsion subgroup of $\tilde{H}_i(\BD{n-a}{2^a1^{n-2a}}; \ZZ)$
      for $n = 2i+6$.} 
    \begin{footnotesize}
    \begin{center}
        \begin{tabular}{|r||c|c|c|c|c|c|c|c|}
          \hline
          & & & & & & & &
          \\[-1.5ex]
          &  $a=0$ & \ 1 \ & \ 2 \ & \ 3 \ & \ 4 \ & \ 5 \ & \ 6\ & \ 7 \
          \\
          \hline
          \hline
          & & & & & & & &
          \\[-2ex]
          $n = 2$ & $0$ & $0$ & - & - & - & - & - & - \\
          \hline
          & & & & & & & &
          \\[-2ex]
          $4$ & $0$ & $0$ & $0$ & - & - & - & - & - \\
          \hline
          & & & & & & & &
          \\[-2ex]
          $6$ & $0$ & $0$ & $0$ & $0$ & - & - & - & - \\
          \hline
          & & & & & & & &
          \\[-2ex]
          $8$ & $0$ & $0$ & $0$ & $0$ & $0$ & - & - & - \\
          \hline
          & & & & & & & &
          \\[-2ex]
          $10$ & $\ZZ_3$ & $0$ & $0$ & $0$ & $0$ &  $0$
          & - & - \\ 
          \hline
          & & & & & & & &
          \\[-2ex]
          $12$  & $\ZZ_3^{56}$ & $\ZZ_3^{10}$ & $\ZZ_3$ & $0$ & $0$ & $0$ & $0$ & - \\
          \hline
          & & & & & & & &
          \\[-2ex]
          $14$  & ? & ? & ? & ? & ? & ? & $\ZZ_5$? & $\ZZ_5$ \\
          \hline
        \end{tabular}
    \end{center}
    \end{footnotesize}
    \label{matchbd2-fig}
  \end{table}

  \section{Remarks and further directions}
  \label{further-sec}

  Using {\tt CHomP} \cite{Pilar}, we managed to
  compute a generator 
  $\gamma'$ for the homology group
  $\tilde{H}_4(\BD{7}{2};\ZZ)\cong \ZZ_5$;
  \begin{eqnarray*}
  \gamma'\!  &=&
  ([12,45,23] + [12,23,34] + [12,34,15] + [12,15,33] + [12,33,45] \\
  &+&
  [22,33,15] + [22,15,34] + [22,34,11] + [22,11,45] + [22,45,33] \\
  &+&
  [11,23,45] + [11,34,23]) \wedge (46-66) \wedge (57-77);
  \end{eqnarray*}
  $[ab,cd,ef] = ab \wedge cd \wedge ef$.
  Note that $\gamma' = \gamma/{\Symm{(2^7)}}$, where
  \begin{eqnarray*}
  \gamma &=&
  ([1\hat{2},5\hat{4},2\hat{3}] + [1\hat{2},2\hat{3},3\hat{4}] + [1\hat{2},3\hat{4},5\hat{1}] + [1\hat{2},5\hat{1},3\hat{3}] + [1\hat{2},3\hat{3},5\hat{4}] \\
  &+&
  [2\hat{2},3\hat{3},5\hat{1}] + [2\hat{2},5\hat{1},3\hat{4}] + [2\hat{2},3\hat{4},1\hat{1}] + [2\hat{2},1\hat{1},5\hat{4}] + [2\hat{2},5\hat{4},3\hat{3}] \\
  &+&
  [1\hat{1},2\hat{3},5\hat{4}] + [1\hat{1},3\hat{4},2\hat{3}]) \wedge
  (4\hat{6}-6\hat{6}) \wedge (7\hat{5}-7\hat{7}). 
  \end{eqnarray*}
  Here, $\hat{i}$ denotes the vertex $i+7$, 
  and the group action is given by the partition $\{U_i : i \in [7]\}$,
  where $U_i = \{i,\hat{i}\}$. Since $\tilde{H}_4(\M[14];\ZZ)$ is
  finite, we conclude that
  $\gamma$ has finite exponent a multiple of five
  in $\tilde{H}_4(\M[14];\ZZ)$. Note that we may view
  $\gamma$ as the product of one cycle in $\tilde{H}_2(\M[8];\ZZ)$
  and two cycles in $\tilde{H}_0(\M[3];\ZZ)$ (defined on three
  disjoint vertex sets).

  Another observation is that we have the following portion of the
  long exact sequence for the pair $(\M[14],\M[13])$:
  \[
  \begin{CD}
    \tilde{H}_4(\M[13]; \ZZ) @>>>
    \tilde{H}_4(\M[14]; \ZZ) @>>>
    \bigoplus_{13} \tilde{H}_3(\M[12]; \ZZ);
 \end{CD}
  \]
  see Bouc \cite{Bouc}. 
  Since $\tilde{H}_3(\M[12]; \ZZ)$ is an elementary $3$-group by
  the data in Table~\ref{matching-fig}, this yields that  
  there must be some element $\delta$ in $\tilde{H}_4(\M[13];\ZZ)$
  such that $\delta$ is identical in  $\tilde{H}_4(\M[14];\ZZ)$ to
  $\gamma$ or $3\gamma$. Obviously, the exponent of $\delta$  
  in $\tilde{H}_4(\M[13];\ZZ)$ is either infinite or a nonzero
  multiple of five; we conjecture the former.

  As mentioned in Section~\ref{intro-sec}, we do not know whether
  there is $5$-torsion in the homology of $\M[n]$ when $n \ge 13$ and
  $n \neq 14$. We would indeed have such torsion for all even $n \ge
  16$ if
  \begin{equation}
    \tilde{H}_d(\M[n]; \ZZ)
    \cong
    \tilde{H}_d(\M[n] \setminus e; \ZZ) \oplus
    \tilde{H}_{d-1}(\M[n-2]; \ZZ)
    \label{onestepdecision-eq}
  \end{equation}
  for all even $n \ge 16$ and $d = n/2-3$.
  Here, $e$ is the edge between $n-1$ and $n$ and $\M[n]
  \setminus e$ is the complex obtained from $\M[n]$ by removing the
  $0$-cell $e$. Using computer, we have verified
  (\ref{onestepdecision-eq}) for all 
  $(n,d)$ such that $n \le 11$ and $d \ge 0$. Note that
  (\ref{onestepdecision-eq}) would follow if the
  sequence 
  \[
  \begin{CD}
    0 \longrightarrow
    \tilde{H}_d(\M[n] \setminus e; \ZZ) @>>>
    \tilde{H}_d(\M[n]; \ZZ)  @>>>
    \tilde{H}_{d-1}(\M[n-2]; \ZZ)  \longrightarrow
    0
 \end{CD}
  \]
  turned out to be split exact. By the long exact sequence for the
  pair $(\M[n],\M[n] \setminus e)$, the mid-portion of this sequence
  is indeed exact.


\begin{thebibliography}{99}
  \bibitem{Andersen} J. L. Andersen, \emph{Determinantal Rings
    Associated with Symmetric Matrices: a Counterexample}, PhD Thesis,
    University of Minnesota, 1992.  
  \bibitem{Ath}
    {C. A. Athanasiadis},
    {Decompositions and connectivity of matching and chessboard
            complexes},
    {\em Discrete Comput. Geom.},
    {\bf 31} (2004), no. 3, 395--403.
  \bibitem{BBLSW} E. Babson, A. Bj\"orner, S. Linusson, J. Shareshian
    and V. Welker, {Complexes of not $i$-connected graphs},
    {\em Topology} {\bf 38} (1999), no. 2, {271--299}.
  \bibitem{BLVZ} 
    A. Bj\"orner, L. Lov\'asz, S. T. Vre\'cica and
    R. T. {\zivaljevic}, {Chessboard complexes and matching
      complexes}, \emph{J. London Math. Soc. (2)} \textbf{49}
    (1994), 25--39.
  \bibitem{Bouc} S. Bouc, {Homologie de certains ensembles de
    2-sous-groupes des groupes sym\'etriques}, {\em J. Algebra}
    \textbf{150} (1992), 187--205.
  \bibitem{Bredon} G. E. Bredon, {\em Introduction to Compact
    Transformation Groups}, Academic Press, 1972.
  \bibitem{Dong} 
    {X. Dong},
    {\em The Topology of Bounded Degree Graph Complexes and
      Finite Free Resolutions}, PhD Thesis, 
    {University of Minnesota}, 2001.
  \bibitem{DongWachs}
    {X. Dong and M. L. Wachs},
    {Combinatorial Laplacian of the matching complex},
    {\em Electronic J. Combin.} \textbf{9} (2002),
    no. 1, R17.
  \bibitem{FH} 
    J. Friedman and P. Hanlon,
    On the Betti numbers of chessboard complexes,
    {\em J. Algebraic Combin.} {\bf 8} (1998), 193--203.
  \bibitem{Garst} P. F. Garst, {\em Cohen-Macaulay complexes and group
      actions}, Ph.D. Thesis, University of Wisconsin-Madison, 1979.
  \bibitem{Homoprog} 
    {J.-G. Dumas, F. Heckenbach, B. D. Saunders and V. Welker},
    {\em Simplicial Homology, a share package for GAP},
    2000.
  \bibitem{thesis} J. Jonsson, \emph{Simplicial Complexes of Graphs},
    Doctoral Thesis, KTH, 2005.
  \bibitem{thesislmn} J. Jonsson, \emph{Simplicial Complexes of Graphs},
    Lecture Notes in Mathematics 1928, Springer, 2008.
  \bibitem{bettimatch} J. Jonsson, Exact Sequences for the Homology of
    the Matching Complex, submitted.
  \bibitem{Kara} 
    {D. B. Karaguezian},
    {Homology of complexes of degree one graphs}, PhD Thesis,
    {Stanford University},
    1994.
  \bibitem{KRW} 
    D. B. Karaguezian, V. Reiner and M. L. Wachs, 
    Matching complexes, bounded degree graph complexes and weight
    spaces of $GL_n$-complexes, {\em J. Algebra} {\bf 239} (2001),
    77--92.
  \bibitem{Kson}
    {R. Ksontini}, {\em Propri\'et\'es homotopiques du complexe de
    {Quillen} du groupe sym\'etrique}, PhD Thesis, {Universit\'e de
    Lausanne}, 2000.
  \bibitem{Pilar} P. Pilarczyk, {\em Computational Homology Program},
    2004.
  \bibitem{RR}
    {V. Reiner and J. Roberts},
    {Minimal resolutions and homology of chessboard and matching
    complexes},
    {\em J. Algebraic Combin.}
    {\bf 11} (2000), {135--154}.
  \bibitem{ShWa} J. Shareshian and M. L. Wachs, Torsion in the
    matching and chessboard complexes, {\em Adv. Math.} to appear.
  \bibitem{Wachs} M. L. Wachs, {Topology of matching, chessboard and
    general bounded degree graph complexes}, \emph{Alg. Universalis}
    \textbf{49} (2003), no. 4, 345--385.
  \bibitem{Ziegvert} G. M. Ziegler, 
    Shellability of chessboard complexes, {\em Israel J. Math.}
    \textbf{87} (1994), 97--110.
  \end{thebibliography}
\end{document}